\documentclass[twoside,onecolumn,centertags,final,a4paper,12pt,reqno]{amsart}
\usepackage{amsmath}
\usepackage{amsfonts}
\usepackage[left=1.5cm, bottom=3cm]{geometry}
\usepackage{cite}

\setcounter{MaxMatrixCols}{10}

\newtheorem{thm}{Theorem}
\newtheorem{lemma}{Lemma}
\newtheorem{cor}{Corollary}
\newtheorem{remark}{Remark}
\theoremstyle{definition}
\newtheorem{dfn}{Definition}
\numberwithin{equation}{section}
\allowdisplaybreaks

\begin{document}
\title[Notes on close-to-convex functions related to strip domain]{Coefficient bounds for
close-to-convex functions associated with vertical strip domain}
\author{Serap BULUT}
\address{Kocaeli University, Faculty of Aviation and Space
Sciences,Arslanbey Campus, 41285 Kartepe-Kocaeli, TURKEY}
\email{serap.bulut@kocaeli.edu.tr}
\subjclass[2010]{Primary 30C45; Secondary 30C50}
\keywords{Analytic functions, close-to-convex functions, coefficient bounds,
subordination, non-homogeneous Cauchy-Euler differential equation.}

\begin{abstract}
By considering a certain univalent function in the open unit disk $\mathbb{U}
$, that maps $\mathbb{U}$ onto a strip domain, we introduce a new class of
analytic and close-to-convex functions by means of a
certain non-homogeneous Cauchy-Euler-type differential equation. We determine the coefficient bounds
for functions in this new class. Relevant
connections of some of the results obtained with those in earlier works are
also provided.
\end{abstract}

\maketitle

\section{Introduction}

Let $\mathcal{A}$ denote the class of functions of the form%
\begin{equation}
f(z)=z+\sum_{n=2}^{\infty }a_{n}z^{n}  \label{1.1}
\end{equation}%
which are analytic in the open unit disk $\mathbb{U}=\left\{ z:z\in \mathbb{C%
}\;\text{and}\;\left\vert z\right\vert <1\right\} $. We also denote by $%
\mathcal{S}$ the class of all functions in the normalized analytic function
class $\mathcal{A}$ which are univalent in $\mathbb{U}$.

For two functions $f$ and $g$, analytic in $\mathbb{U}$, we say that the
function $f$ is subordinate to $g$ in $\mathbb{U}$, and write%
\begin{equation*}
f\left( z\right) \prec g\left( z\right) \qquad \left( z\in \mathbb{U}\right)
,
\end{equation*}%
if there exists a Schwarz function $\omega $, analytic in $\mathbb{U}$, with%
\begin{equation*}
\omega \left( 0\right) =0\qquad \text{and\qquad }\left\vert \omega \left(
z\right) \right\vert <1\text{\qquad }\left( z\in \mathbb{U}\right)
\end{equation*}%
such that%
\begin{equation*}
f\left( z\right) =g\left( \omega \left( z\right) \right) \text{\qquad }%
\left( z\in \mathbb{U}\right) .
\end{equation*}%
Indeed, it is known that%
\begin{equation*}
f\left( z\right) \prec g\left( z\right) \quad \left( z\in \mathbb{U}\right)
\Rightarrow f\left( 0\right) =g\left( 0\right) \text{\quad and\quad }f\left(
\mathbb{U}\right) \subset g\left( \mathbb{U}\right) .
\end{equation*}%
Furthermore, if the function $g$ is univalent in $\mathbb{U}$, then we have
the following equivalence%
\begin{equation*}
f\left( z\right) \prec g\left( z\right) \quad \left( z\in \mathbb{U}\right)
\Leftrightarrow f\left( 0\right) =g\left( 0\right) \text{\quad and\quad }%
f\left( \mathbb{U}\right) \subset g\left( \mathbb{U}\right) .
\end{equation*}

A function $f\in \mathcal{A}$ is said to be starlike of order $\alpha $%
\thinspace $\left( 0\leq \alpha <1\right) $, if it satisfies the inequality%
\begin{equation*}
\Re \left( \frac{zf^{\prime }(z)}{f(z)}\right) >\alpha \qquad \left( z\in
\mathbb{U}\right) .
\end{equation*}%
We denote the class which consists of all functions $f\in \mathcal{A}$ that
are starlike of order $\alpha $ by $\mathcal{S}^{\ast }(\alpha )$. It is
well-known that $\mathcal{S}^{\ast }(\alpha )\subset \mathcal{S}^{\ast }(0)=%
\mathcal{S}^{\ast }\subset \mathcal{S}.$

Let $0\leq \alpha ,\delta <1.$ A function $f\in \mathcal{A}$ is said to be
close-to-convex of order $\alpha $ and type $\delta $ if there exists a
function $g\in \mathcal{S}^{\ast }\left( \delta \right) $ such that the
inequality%
\begin{equation*}
\Re \left( \frac{zf^{\prime }(z)}{g(z)}\right) >\alpha \qquad \left( z\in
\mathbb{U}\right)
\end{equation*}%
holds. We denote the class which consists of all functions $f\in \mathcal{A}$
that are close-to-convex of order $\alpha $ and type $\delta $ by $\mathcal{C%
}(\alpha ,\delta )$. This class is introduced by Libera \cite{L}.

In particular, when $\delta =0$ we have $\mathcal{C}(\alpha ,0)=\mathcal{C}%
(\alpha )$ of close-to-convex functions of order $\alpha $, and also we get $%
\mathcal{C}(0,0)=\mathcal{C}$ of close-to-convex functions introduced by
Kaplan \cite{K}. It is well-known that $\mathcal{S}^{\ast }\subset \mathcal{C%
}\subset \mathcal{S}$.

Furthermore a function $f\in \mathcal{A}$ is said to be in the class $%
\mathcal{M}\left( \beta \right) $\thinspace $\left( \beta >1\right) $ if it
satisfies the inequality%
\begin{equation*}
\Re \left( \frac{zf^{\prime }(z)}{f(z)}\right) <\beta \qquad \left( z\in
\mathbb{U}\right) .
\end{equation*}%
This class introduced by Uralegaddi et al. \cite{UGS}.

Motivated by the classes $\mathcal{S}^{\ast }(\alpha )$ and $\mathcal{M}%
\left( \beta \right) $, Kuroki and Owa \cite{KO} introduced the subclass $%
\mathcal{S}\left( \alpha ,\beta \right) $ of analytic functions $f\in
\mathcal{A}$ which is given by Definition $\ref{dfn1}$ below.

\begin{dfn}
\label{dfn1}(see \cite{KO}) Let $\mathcal{S}\left( \alpha ,\beta \right) $
be a class of functions $f\in \mathcal{A}$ which satisfy the inequality%
\begin{equation*}
\alpha <\Re \left( \frac{zf^{\prime }\left( z\right) }{f\left( z\right) }%
\right) \mathbb{<\beta }\qquad \left( z\in \mathbb{U}\right)
\end{equation*}%
for some real number $\alpha \;\left( \alpha <1\right) $ and some real
number $\beta \;\left( \beta >1\right) .$
\end{dfn}

The class $\mathcal{S}\left( \alpha ,\beta \right) $\ is non-empty. For
example, the function $f\in \mathcal{A}$ given by%
\begin{equation*}
f(z)=z\exp \left\{ \frac{\beta -\alpha }{\pi }i\int_{0}^{z}\frac{1}{t}\log
\left( \frac{1-e^{2\pi i\frac{1-\alpha }{\beta -\alpha }}t}{1-t}\right)
dt\right\}
\end{equation*}%
is in the class $\mathcal{S}\left( \alpha ,\beta \right) $.

Also for $f\in \mathcal{S}\left( \alpha ,\beta \right) $, if $\alpha \geq 0$
then $f\in \mathcal{S}^{\ast }(\alpha )$ in $\mathbb{U}$, which implies that
$f\in \mathcal{S}.$

\begin{lemma}
\label{lm1}\cite{KO} Let $f\in \mathcal{A}$ and $\alpha <1<\beta $. Then $%
f\in \mathcal{S}\left( \alpha ,\beta \right) $ if and only if%
\begin{equation*}
\frac{zf^{\prime }\left( z\right) }{f\left( z\right) }\prec 1+\frac{\beta
-\alpha }{\pi }i\log \left( \frac{1-e^{2\pi i\frac{1-\alpha }{\beta -\alpha }%
}z}{1-z}\right) \qquad \left( z\in \mathbb{U}\right) .
\end{equation*}
\end{lemma}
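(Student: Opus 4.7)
Denote the right-hand side by $q(z)$. My strategy is to show that $q$ is a univalent conformal map from $\mathbb{U}$ onto the open vertical strip $\Omega := \{w \in \mathbb{C} : \alpha < \Re w < \beta\}$ with $q(0) = 1$. Once this is established, the univalent-subordination criterion recalled in the introduction, namely $\varphi \prec q$ is equivalent to $\varphi(0) = q(0)$ together with $\varphi(\mathbb{U}) \subset q(\mathbb{U})$ whenever $q$ is univalent, finishes the lemma at once: applied to $\varphi(z) = zf'(z)/f(z)$, the condition $\varphi(0) = 1 = q(0)$ is automatic for every $f \in \mathcal{A}$, while $\varphi(\mathbb{U}) \subset \Omega$ is exactly the defining inequality of $\mathcal{S}(\alpha,\beta)$.

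The bulk of the work is therefore to identify $q$ as the advertised strip map. I would factor
\[
  q(z) = 1 + \tfrac{\beta-\alpha}{\pi}\, i \log M(z), \qquad M(z) := \frac{1 - e^{i\theta}z}{1-z}, \qquad \theta := \tfrac{2\pi(1-\alpha)}{\beta-\alpha} \in (0, 2\pi),
\]
and analyze each factor in turn. The Möbius transformation $M$ has a simple pole on the unit circle at $z=1$, so $M(\partial\mathbb{U})$ is a straight line; using the standard identity $1 - e^{it} = -2i\sin(t/2)\,e^{it/2}$, the boundary values simplify to $M(e^{i\phi}) = e^{i\theta/2}\sin((\phi+\theta)/2)/\sin(\phi/2)$, a real multiple of $e^{i\theta/2}$. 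Hence $M(\partial\mathbb{U})$ is the line through the origin with direction $e^{i\theta/2}$, and since $M(0)=1$ lies off this line, $M(\mathbb{U})$ is the open half-plane bounded by it and containing~$1$. In particular $0 \notin M(\mathbb{U})$, so the branch of $\log$ satisfying $\log 1 = 0$ is single-valued and univalent on $M(\mathbb{U})$ and maps it onto the horizontal strip $\{u+iv : \theta/2 - \pi < v < \theta/2\}$ of height $\pi$. The affine transformation $w \mapsto 1 + \tfrac{\beta-\alpha}{\pi}iw$ then rotates this strip by $\pi/2$, rescales its width to $\beta-\alpha$, and shifts it, producing a vertical strip whose real-part bounds I then check coincide exactly with $\alpha$ and $\beta$.

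The main obstacle is this geometric bookkeeping in the middle paragraph: tracking half-planes, the chosen branch of $\log$, and the resulting strip boundaries carefully through the three successive maps. The non-obvious point is that the particular value $\theta = 2\pi(1-\alpha)/(\beta-\alpha)$ is precisely what aligns the final vertical strip with $(\alpha,\beta)$ rather than with some translated interval; concretely, it forces $\tfrac{\beta-\alpha}{\pi}\cdot\tfrac{\theta}{2} = 1-\alpha$, so that the bounds become $1-(1-\alpha)=\alpha$ and $1+(\beta-\alpha)-(1-\alpha)=\beta$. Once $q$ is confirmed to be a conformal bijection $\mathbb{U} \to \Omega$ with $q(0) = 1$, the subordination equivalence closes both implications of the lemma without further work.
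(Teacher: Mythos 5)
Your proposal is correct, and it follows the standard route that the paper itself indicates: the paper cites Lemma \ref{lm1} from Kuroki--Owa without reproving it, but immediately after the statement it records precisely the facts you establish, namely that $f_{\alpha,\beta}$ is univalent in $\mathbb{U}$ with $f_{\alpha,\beta}(0)=1$ and maps $\mathbb{U}$ conformally onto the strip $\Omega_{\alpha,\beta}$, from which the equivalence follows by the subordination criterion for univalent majorants exactly as you argue. Your verification of the strip map (Möbius factor onto a half-plane through the boundary-value computation, logarithm onto a horizontal strip of height $\pi$, affine map aligning the bounds with $\alpha$ and $\beta$) is accurate, so there is nothing to correct.
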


Lemma $\ref{lm1}$ means that the function $f_{\alpha ,\beta }:\mathbb{%
U\rightarrow C}$ defined by%
\begin{equation}
f_{\alpha ,\beta }(z)=1+\frac{\beta -\alpha }{\pi }i\log \left( \frac{%
1-e^{2\pi i\frac{1-\alpha }{\beta -\alpha }}z}{1-z}\right)  \label{1.2}
\end{equation}%
is analytic in $\mathbb{U}$ with $f_{\alpha ,\beta }(0)=1$ and maps the unit
disk $\mathbb{U}$ onto the vertical strip domain%
\begin{equation}
\Omega _{\alpha ,\beta }=\left\{ w\in \mathbb{C}:\alpha <\Re \left( w\right)
\mathbb{<\beta }\right\}  \label{1.x}
\end{equation}%
conformally.

We note that the function $f_{\alpha ,\beta }$ defined by $\left( \ref{1.2}%
\right) $ is a convex univalent function in $\mathbb{U}$ and has the form%
\begin{equation*}
f_{\alpha ,\beta }(z)=1+\sum_{n=1}^{\infty }B_{n}z^{n},
\end{equation*}%
where%
\begin{equation}
B_{n}=\frac{\beta -\alpha }{n\pi }i\left( 1-e^{2n\pi i\frac{1-\alpha }{\beta
-\alpha }}\right) \qquad \left( n=1,2,\ldots \right) .  \label{1.3}
\end{equation}

Making use of Definition $\ref{dfn1}$, Kuroki and Owa \cite{KO} proved the
following coefficient bounds for the Taylor-Maclaurin coefficients for
functions in the sublass $\mathcal{S}\left( \alpha ,\beta \right) $ of
analytic functions $f\in \mathcal{A}$.

\begin{thm}
\label{th.ko}\cite[Theorem 2.1]{KO} Let the function $f\in \mathcal{A}$ be
defined by $(\ref{1.1})$. If $f\in \mathcal{S}\left( \alpha ,\beta \right) $%
, then%
\begin{equation*}
\left\vert a_{n}\right\vert \leq \frac{\prod\limits_{k=2}^{n}\left[ k-2+%
\frac{2\left( \beta -\alpha \right) }{\pi }\sin \frac{\pi \left( 1-\alpha
\right) }{\beta -\alpha }\right] }{\left( n-1\right) !}\qquad \left(
n=2,3,\ldots \right) .
\end{equation*}
\end{thm}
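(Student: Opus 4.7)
The plan is to combine the subordination criterion from Lemma \ref{lm1} with Rogosinski's theorem on coefficients of functions subordinate to a convex univalent function, and then run an induction on $n$.

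First I would apply Lemma \ref{lm1} to rewrite membership in $\mathcal{S}(\alpha,\beta)$ as the subordination
\[
\frac{zf'(z)}{f(z)} \prec f_{\alpha,\beta}(z) = 1 + \sum_{n=1}^{\infty} B_n z^n,
\]
where $B_n$ is given by \eqref{1.3}. Writing $\frac{zf'(z)}{f(z)} = 1 + \sum_{n=1}^{\infty} p_n z^n$, and recalling that $f_{\alpha,\beta}$ is convex univalent in $\mathbb{U}$, Rogosinski's theorem yields the uniform bound $|p_n| \leq |B_1|$ for all $n\geq 1$. A direct calculation from \eqref{1.3} gives
\[
|B_1| = \frac{\beta-\alpha}{\pi}\left|1 - e^{2\pi i\frac{1-\alpha}{\beta-\alpha}}\right| = \frac{2(\beta-\alpha)}{\pi}\sin\frac{\pi(1-\alpha)}{\beta-\alpha},
\]
which is exactly the constant $B$ appearing in the product in the statement.

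Next I would clear denominators in $\frac{zf'(z)}{f(z)} = 1 + \sum p_n z^n$ to obtain $zf'(z) = f(z)\bigl(1 + \sum p_n z^n\bigr)$, and then equate the coefficient of $z^n$ on both sides. Using \eqref{1.1} this produces the recurrence
\[
(n-1)a_n = \sum_{k=1}^{n-1} a_k\, p_{n-k} \qquad (n \geq 2),
\]
whence, writing $B := |B_1|$ and invoking the Rogosinski bound termwise,
\[
|a_n| \leq \frac{B}{n-1}\sum_{k=1}^{n-1}|a_k|.
\]

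Finally I would prove the stated bound by induction on $n$, the base case $n=2$ being immediate since $|a_2| \leq B$. For the inductive step, set $c_k := \prod_{j=2}^{k}(j-2+B)/(k-1)!$ (with $c_1 = 1$) and assume $|a_k| \leq c_k$ for $k < n$. The main point is the algebraic identity $B \sum_{k=1}^{n-1} c_k = (n-1)\, c_n$, which one verifies by noting $c_n = \frac{n-2+B}{n-1}c_{n-1}$ and reducing, via a secondary induction, to the analogous identity for $n-1$. Substituting this into the inequality above yields $|a_n| \leq c_n$, which is the desired estimate. The only real subtlety, and the step to be handled with care, is this telescoping combinatorial identity that converts a sum of $c_k$'s into a single product; everything else is either a direct application of Lemma \ref{lm1}, a citation of Rogosinski's theorem, or coefficient bookkeeping.
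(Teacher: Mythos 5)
The paper does not prove this statement itself---it is quoted from \cite[Theorem 2.1]{KO}---so there is no in-paper proof to compare against; judged on its own, your argument is correct and is essentially the standard Kuroki--Owa route: the subordination of Lemma \ref{lm1}, Rogosinski's bound $|p_n|\leq|B_1|$ with $|B_1|=\frac{2(\beta-\alpha)}{\pi}\sin\frac{\pi(1-\alpha)}{\beta-\alpha}$, the recursion $(n-1)a_n=\sum_{k=1}^{n-1}a_k p_{n-k}$, and induction via the identity $B\sum_{k=1}^{n-1}c_k=(n-1)c_n$, which indeed follows from $c_n=\frac{n-2+B}{n-1}c_{n-1}$. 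This is also exactly the technique the paper itself deploys for Theorem \ref{thm1}, so your reconstruction fits the paper's framework; the only cosmetic point is that Lemma \ref{lm2} as stated applies to functions vanishing at the origin, so one formally applies it to $p(z)-1\prec f_{\alpha,\beta}(z)-1$.
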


Here, in our present sequel to some of the aforecited works (especially \cite%
{KO}), we first introduce the following subclasses of analytic functions.

\begin{dfn}
\label{dfn3}Let $\alpha $ and $\beta $\ be real such that $0\leq \alpha
<1<\beta $. We denote by $\mathcal{S}_{g}\left( \alpha ,\beta \right) $ the
class of functions $f\in \mathcal{A}$ satisfying%
\begin{equation*}
\alpha <\Re \left( \frac{zf^{\prime }\left( z\right) }{g\left( z\right) }%
\right) \mathbb{<\beta }\qquad \left( z\in \mathbb{U}\right) ,
\end{equation*}%
where $g\in \mathcal{S}\left( \delta ,\beta \right) $ with $0\leq \delta
<1<\beta .$
\end{dfn}

Note that for given $0\leq \alpha ,\delta <1<\beta $, $f\in \mathcal{S}%
_{g}\left( \alpha ,\beta \right) $ if and only if the following two
subordination equations are satisfied:%
\begin{equation*}
\frac{zf^{\prime }\left( z\right) }{g\left( z\right) }\prec \frac{1+\left(
1-2\alpha \right) z}{1-z}\qquad \text{and\qquad }\frac{zf^{\prime }\left(
z\right) }{g\left( z\right) }\prec \frac{1-\left( 1-2\beta \right) z}{1+z}.
\end{equation*}

\begin{remark}
$(i)$ If we let $\beta \rightarrow \infty $ in Definition $\ref{dfn3}$, then
the class $\mathcal{S}_{g}\left( \alpha ,\beta \right) $ reduces to the
class $\mathcal{C}(\alpha ,\delta )$ of close-to-convex functions of order $%
\alpha $ and type $\delta $.

\noindent $(ii)$ If we let $\delta =0,\;\beta \rightarrow \infty $ in
Definition $\ref{dfn3}$, then the class $\mathcal{S}_{g}\left( \alpha ,\beta
\right) $ reduces to the class $\mathcal{C}(\alpha )$ of close-to-convex
functions of order $\alpha $.

\noindent $(iii)$ If we let $\alpha =\delta =0,\;\beta \rightarrow \infty $
in Definition $\ref{dfn3}$, then the class $\mathcal{S}_{g}\left( \alpha
,\beta \right) $ reduces to the close-to-convex functions class $\mathcal{C}$%
.
\end{remark}

Using $\left( \ref{1.x}\right) $ and by the principle of subordination, we
can immediately obtain Lemma $\ref{lm}$.

\begin{lemma}
\label{lm}Let $\alpha ,\beta $\ and $\delta $\ be real numbers such that $%
0\leq \alpha ,\delta <1<\beta $\ and let the function $f\in \mathcal{A}$ be
defined by $(\ref{1.1})$. Then $f\in \mathcal{S}_{g}\left( \alpha ,\beta
\right) $ if and only if%
\begin{equation*}
\frac{zf^{\prime }\left( z\right) }{g\left( z\right) }\prec f_{\alpha ,\beta
}(z)
\end{equation*}%
where $f_{\alpha ,\beta }(z)$ is defined by $\left( \ref{1.2}\right) $.
\end{lemma}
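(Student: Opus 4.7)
The plan is to recognize this as a direct consequence of the principle of subordination together with the conformal mapping property of $f_{\alpha,\beta}$ recorded after Lemma 1. Throughout, I would denote $h(z) := \frac{zf'(z)}{g(z)}$ and observe that, since $g \in \mathcal{S}(\delta,\beta)$ with $\delta \geq 0$ forces $g \in \mathcal{S}^{\ast}(\delta) \subset \mathcal{S}$ (as noted in the excerpt), the function $g$ is univalent with $g(z)/z$ nonvanishing on $\mathbb{U}$; hence $h$ is analytic in $\mathbb{U}$ with $h(0)=1 = f_{\alpha,\beta}(0)$.

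First, for the forward implication, suppose $f \in \mathcal{S}_{g}(\alpha,\beta)$. Then by definition
\begin{equation*}
\alpha < \Re\bigl(h(z)\bigr) < \beta \qquad (z \in \mathbb{U}),
\end{equation*}
which says precisely that $h(\mathbb{U}) \subset \Omega_{\alpha,\beta}$. By (1.x) and the paragraph following Lemma 1, $f_{\alpha,\beta}$ is univalent on $\mathbb{U}$ with $f_{\alpha,\beta}(\mathbb{U}) = \Omega_{\alpha,\beta}$. Combining $h(0) = f_{\alpha,\beta}(0)$ and $h(\mathbb{U}) \subset f_{\alpha,\beta}(\mathbb{U})$ with the equivalence quoted in the introduction (valid because $f_{\alpha,\beta}$ is univalent), I would conclude $h \prec f_{\alpha,\beta}$.

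For the converse, assume $h \prec f_{\alpha,\beta}$. Then the implication part of the subordination definition gives $h(\mathbb{U}) \subset f_{\alpha,\beta}(\mathbb{U}) = \Omega_{\alpha,\beta}$, and unwrapping the definition of $\Omega_{\alpha,\beta}$ yields $\alpha < \Re(h(z)) < \beta$ for all $z \in \mathbb{U}$, so $f \in \mathcal{S}_{g}(\alpha,\beta)$.

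There is really no serious obstacle: the whole argument is a bookkeeping exercise around the univalence of $f_{\alpha,\beta}$ and the shape of its image. The only step requiring a moment's care is the very first one, namely verifying that $h = zf'/g$ is analytic on $\mathbb{U}$ (and sending $0$ to $1$); this is why the hypothesis $g \in \mathcal{S}(\delta,\beta)$ with $\delta \geq 0$ is crucial, since it ensures $g \in \mathcal{S}^{\ast}$ and therefore $g(z) \neq 0$ for $z \in \mathbb{U}\setminus\{0\}$, so that the quotient is well defined and the subordination principle applies.
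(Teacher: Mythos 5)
Your proposal is correct and follows exactly the route the paper intends: the paper gives no detailed proof, stating only that the lemma follows ``using $\left( \ref{1.x}\right)$ and by the principle of subordination,'' and your argument via the univalence of $f_{\alpha ,\beta }$, the equality $f_{\alpha ,\beta }(\mathbb{U})=\Omega _{\alpha ,\beta }$, and the equivalence $h\prec f_{\alpha ,\beta }\Leftrightarrow h(0)=f_{\alpha ,\beta }(0)$ and $h(\mathbb{U})\subset f_{\alpha ,\beta }(\mathbb{U})$ is precisely that argument, with the added (and welcome) care about the analyticity of $zf^{\prime }/g$.
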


\begin{dfn}
\label{dfn4}A function $f\in \mathcal{A}$ is said to be in the class $%
\mathcal{B}_{g}\left( \alpha ,\beta ;\rho \right) $ if it satisfies the
following non-homogenous Cauchy-Euler differential equation:%
\begin{equation*}
z^{2}\frac{d^{2}w}{dz^{2}}+2\left( 1+\rho \right) z\frac{dw}{dz}+\rho \left(
1+\rho \right) w=\left( 1+\rho \right) \left( 2+\rho \right) \varphi (z)
\end{equation*}%
\begin{equation*}
\left( w=f(z)\in \mathcal{A},\;\varphi \in \mathcal{S}_{g}\left( \alpha
,\beta \right) ,\;g\in \mathcal{S}\left( \delta ,\beta \right) ,\;0\leq
\alpha ,\delta <1<\beta ,\;\rho \in \mathbb{R}\backslash \left( -\infty ,-1%
\right] \right) .
\end{equation*}
\end{dfn}

\begin{remark}
$(i)$ If we let $\beta \rightarrow \infty $ in Definition $\ref{dfn4}$, then
we get the class $\mathcal{B}_{g}\left( \alpha ;\rho \right) $ which
consists of functions $f\in \mathcal{A}$ satisfying%
\begin{equation*}
z^{2}\frac{d^{2}w}{dz^{2}}+2\left( 1+\rho \right) z\frac{dw}{dz}+\rho \left(
1+\rho \right) w=\left( 1+\rho \right) \left( 2+\rho \right) \varphi (z)
\end{equation*}%
\begin{equation*}
\left( \varphi \in \mathcal{C}(\alpha ,\delta ),\;0\leq \alpha ,\delta
<1,\;\rho \in \mathbb{R}\backslash \left( -\infty ,-1\right] \right) .
\end{equation*}

\noindent $(ii)$ If we let $\delta =0,\;\beta \rightarrow \infty $ in
Definition $\ref{dfn4}$, then we get the class $\mathcal{H}_{g}\left( \alpha
;\rho \right) $ which consists of functions $f\in \mathcal{A}$ satisfying%
\begin{equation*}
z^{2}\frac{d^{2}w}{dz^{2}}+2\left( 1+\rho \right) z\frac{dw}{dz}+\rho \left(
1+\rho \right) w=\left( 1+\rho \right) \left( 2+\rho \right) \varphi (z)
\end{equation*}%
\begin{equation*}
\left( \varphi \in \mathcal{C}(\alpha ),\;0\leq \alpha <1,\;\rho \in \mathbb{%
R}\backslash \left( -\infty ,-1\right] \right) .
\end{equation*}

\noindent $(iii)$ If we let $\alpha =\delta =0,\;\beta \rightarrow \infty $
in Definition $\ref{dfn4}$, then we get the class $\mathcal{M}_{g}\left(
\rho \right) $ which consists of functions $f\in \mathcal{A}$ satisfying%
\begin{equation*}
z^{2}\frac{d^{2}w}{dz^{2}}+2\left( 1+\rho \right) z\frac{dw}{dz}+\rho \left(
1+\rho \right) w=\left( 1+\rho \right) \left( 2+\rho \right) \varphi (z)
\end{equation*}%
\begin{equation*}
\left( \varphi \in \mathcal{C},\;\rho \in \mathbb{R}\backslash \left(
-\infty ,-1\right] \right) .
\end{equation*}
\end{remark}

The coefficient problem for close-to-convex functions studied many authors
in recent years, (see, for example \cite{B, BHG, R, SAS, SXW, UNR, UNAR}).
Upon inspiration from the recent work of Kuroki and Owa \cite{KO} the aim of
this paper is to obtain coefficient bounds for the Taylor-Maclaurin
coefficients for functions in the function classes $\mathcal{S}_{g}\left(
\alpha ,\beta \right) $ and $\mathcal{B}_{g}\left( \alpha ,\beta ;\rho
\right) $ of analytic functions which we have introduced here. Also we
investigate Fekete-Szeg\"{o} problem for functions belong to the function
class $\mathcal{S}_{g}\left( \alpha ,\beta \right) $.

\section{Coefficient bounds}

In order to prove our main results (Theorems $\ref{thm1}$ and $\ref{thm2}$
below), we first recall the following lemma due to Rogosinski \cite{R2}.

\begin{lemma}
\label{lm2}Let the function $\mathfrak{g}$ given by%
\begin{equation*}
\mathfrak{g}\left( z\right) =\sum_{k=1}^{\infty }\mathfrak{b}_{k}z^{k}\qquad
\left( z\in \mathbb{U}\right)
\end{equation*}%
be convex in $\mathbb{U}.$ Also let the function $\mathfrak{f}$ given by%
\begin{equation*}
\mathfrak{f}(z)=\sum_{k=1}^{\infty }\mathfrak{a}_{k}z^{k}\qquad \left( z\in
\mathbb{U}\right)
\end{equation*}%
be holomorphic in $\mathbb{U}.$ If%
\begin{equation*}
\mathfrak{f}\left( z\right) \prec \mathfrak{g}\left( z\right) \qquad \left(
z\in \mathbb{U}\right) ,
\end{equation*}%
then%
\begin{equation*}
\left\vert \mathfrak{a}_{k}\right\vert \leq \left\vert \mathfrak{b}%
_{1}\right\vert \qquad \left( k=1,2,\ldots \right) .
\end{equation*}
\end{lemma}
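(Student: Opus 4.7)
The plan is to combine a symmetrization of $\mathfrak{f}$ with the generalized Schwarz lemma, using convexity of $\mathfrak{g}$ in exactly one critical spot. Since $\mathfrak{f}\prec\mathfrak{g}$ and $\mathfrak{g}$ is (convex and hence) univalent, I would first fix a Schwarz function $\omega$ with $\omega(0)=0$ and $|\omega(z)|<1$ on $\mathbb{U}$ such that $\mathfrak{f}(z)=\mathfrak{g}(\omega(z))$. For a fixed integer $k\geq 1$ I would introduce the averaged function
\[
F(z):=\frac{1}{k}\sum_{j=0}^{k-1}\mathfrak{f}\bigl(e^{2\pi ij/k}z\bigr),
\]
whose Taylor expansion reduces, after the elementary root-of-unity filter $\frac{1}{k}\sum_{j}e^{2\pi ijm/k}=\mathbf{1}[k\mid m]$, to $F(z)=\mathfrak{a}_{k}z^{k}+\mathfrak{a}_{2k}z^{2k}+\cdots$. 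In particular $F$ vanishes to order at least $k$ at the origin and its $k$-th Taylor coefficient equals $\mathfrak{a}_{k}$.

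The convexity hypothesis enters at one decisive step: each summand $\mathfrak{f}(e^{2\pi ij/k}z)=\mathfrak{g}(\omega(e^{2\pi ij/k}z))$ lies in the convex domain $\mathfrak{g}(\mathbb{U})$, so their equal-weight convex combination $F(z)$ also belongs to $\mathfrak{g}(\mathbb{U})$. Because $\mathfrak{g}$ is univalent, $\psi:=\mathfrak{g}^{-1}\circ F$ defines a holomorphic self-map of $\mathbb{U}$ with $\psi(0)=0$, i.e., a Schwarz function. Expanding $\mathfrak{g}^{-1}(w)=w/\mathfrak{b}_{1}+O(w^{2})$ near $w=0$ and substituting $F(z)=\mathfrak{a}_{k}z^{k}+O(z^{k+1})$, I obtain
\[
\psi(z)=\frac{\mathfrak{a}_{k}}{\mathfrak{b}_{1}}\,z^{k}+O(z^{k+1}).
\]

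Since $\psi\colon\mathbb{U}\to\mathbb{U}$ vanishes to order at least $k$ at $0$, the quotient $\psi(z)/z^{k}$ extends holomorphically to $\mathbb{U}$. The maximum principle, applied on $|z|\leq r$ and then passed to the limit $r\to 1^{-}$, yields $|\psi(z)/z^{k}|\leq 1$ on $\mathbb{U}$, which at $z=0$ reads $|\mathfrak{a}_{k}/\mathfrak{b}_{1}|\leq 1$, and hence $|\mathfrak{a}_{k}|\leq|\mathfrak{b}_{1}|$, as required.

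The main obstacle in this plan is justifying that $\psi$ is really a well-defined holomorphic function on all of $\mathbb{U}$; this rests squarely on the convexity of $\mathfrak{g}(\mathbb{U})$, which is what keeps $F(z)$ inside $\mathfrak{g}(\mathbb{U})$ and allows $\mathfrak{g}^{-1}$ to be applied globally. Without convexity the symmetrization step breaks down. The argument also implicitly requires $\mathfrak{b}_{1}\neq 0$, which is automatic because a non-constant convex univalent function with $\mathfrak{g}(0)=0$ must satisfy $\mathfrak{g}'(0)\neq 0$; the case $\mathfrak{g}\equiv 0$ is trivial.
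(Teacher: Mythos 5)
Your proof is correct. Note that the paper gives no proof of this lemma at all: it is quoted as a known result of Rogosinski (reference [R2] in the bibliography), so there is no internal argument to compare against. What you have written is precisely the classical proof of Rogosinski's majorization theorem: average $\mathfrak{f}$ over the $k$-th roots of unity so that the root-of-unity filter isolates the coefficients $\mathfrak{a}_{k},\mathfrak{a}_{2k},\ldots$; use convexity of $\mathfrak{g}\left( \mathbb{U}\right) $ to keep the average $F$ inside the image, so that $\psi =\mathfrak{g}^{-1}\circ F$ is a Schwarz function vanishing to order at least $k$ at the origin; then the generalized Schwarz lemma (your maximum-principle argument applied to $\psi (z)/z^{k}$) gives $\left\vert \mathfrak{a}_{k}/\mathfrak{b}_{1}\right\vert \leq 1$. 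You also correctly dispose of the side issues: $F(0)=0$, and $\mathfrak{b}_{1}=\mathfrak{g}^{\prime }(0)\neq 0$ because a convex function in this setting is univalent. So the argument is complete and is essentially the same one found in the cited source and in standard texts on univalent functions.
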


We now state and prove each of our main results given by Theorems $\ref{thm1}
$ and $\ref{thm2}$ below.

\begin{thm}
\label{thm1}Let $\alpha ,\beta $\ and $\delta $\ be real numbers such that $%
0\leq \alpha ,\delta <1<\beta $\ and let the function $f\in \mathcal{A}$ be
defined by $(\ref{1.1})$. If $f\in \mathcal{S}_{g}\left( \alpha ,\beta
\right) $, then%
\begin{eqnarray*}
\left\vert a_{n}\right\vert &\leq &\frac{\prod\limits_{k=2}^{n}\left[ k-2+%
\frac{2\left( \beta -\delta \right) }{\pi }\sin \frac{\pi \left( 1-\delta
\right) }{\beta -\delta }\right] }{n!} \\
&&+\frac{2\left( \beta -\alpha \right) }{n\pi }\sin \frac{\pi \left(
1-\alpha \right) }{\beta -\alpha }\left( 1+\sum_{j=1}^{n-2}\frac{%
\prod\limits_{k=2}^{n-j}\left[ k-2+\frac{2\left( \beta -\delta \right) }{\pi
}\sin \frac{\pi \left( 1-\delta \right) }{\beta -\delta }\right] }{\left(
n-j-1\right) !}\right) \qquad \left( n=2,3,\ldots \right) ,
\end{eqnarray*}%
where $g\in \mathcal{S}\left( \delta ,\beta \right) .$
\end{thm}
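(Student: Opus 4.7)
The plan is to combine three ingredients: the subordination characterization of $\mathcal{S}_g(\alpha,\beta)$ provided by Lemma \ref{lm}, the Rogosinski coefficient inequality (Lemma \ref{lm2}), and the Kuroki--Owa coefficient bound (Theorem \ref{th.ko}) applied to the auxiliary function $g\in\mathcal{S}(\delta,\beta)$.

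First, I would write $g(z)=z+\sum_{n=2}^{\infty}b_{n}z^{n}$ and set
$$h(z):=\frac{zf'(z)}{g(z)}=1+\sum_{n=1}^{\infty}c_{n}z^{n}.$$
By Lemma \ref{lm}, $h\prec f_{\alpha,\beta}$, and the function $f_{\alpha,\beta}(z)-1=\sum_{n=1}^{\infty}B_{n}z^{n}$ is convex and univalent in $\mathbb{U}$ with $B_{1}=\frac{\beta-\alpha}{\pi}i\bigl(1-e^{2\pi i(1-\alpha)/(\beta-\alpha)}\bigr)$. Applying Lemma \ref{lm2} to $h(z)-1\prec f_{\alpha,\beta}(z)-1$ gives
$$|c_{n}|\le|B_{1}|=\frac{2(\beta-\alpha)}{\pi}\sin\frac{\pi(1-\alpha)}{\beta-\alpha}\qquad(n\ge1).$$

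Next, since $g\in\mathcal{S}(\delta,\beta)$, Theorem \ref{th.ko} yields
$$|b_{n}|\le\frac{P_{n}}{(n-1)!},\qquad P_{n}:=\prod_{k=2}^{n}\left[k-2+\frac{2(\beta-\delta)}{\pi}\sin\frac{\pi(1-\delta)}{\beta-\delta}\right]\quad(n\ge2),$$
with the convention $b_{1}=1$ and $P_{1}=1$. The identification $zf'(z)=g(z)h(z)$ and matching coefficients of $z^{n}$ gives the convolution identity
$$na_{n}=b_{n}+\sum_{j=1}^{n-1}b_{n-j}\,c_{j}\qquad(n\ge2),$$
where $b_{1}=1$. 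Taking absolute values and using the bounds on $|c_{j}|$ and $|b_{k}|$ then yields
$$n|a_{n}|\le\frac{P_{n}}{(n-1)!}+|B_{1}|\left(1+\sum_{k=2}^{n-1}\frac{P_{k}}{(k-1)!}\right).$$

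Finally, dividing by $n$ and performing the substitution $k=n-j$ in the last sum converts $\sum_{k=2}^{n-1}P_{k}/(k-1)!$ into $\sum_{j=1}^{n-2}P_{n-j}/(n-j-1)!$, which matches the stated bound exactly. The only real obstacle is bookkeeping: making sure that the endpoint terms $b_{1}=1$ and $P_{1}/0!=1$ combine correctly to produce the leading ``$1+$'' inside the parentheses of the claimed formula, and that the two Kuroki--Owa parameter specializations (one with $(\alpha,\beta)$ for the subordination target, one with $(\delta,\beta)$ for $g$) are not confused.
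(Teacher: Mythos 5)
Your proposal is correct and follows essentially the same route as the paper: the subordination characterization from Lemma \ref{lm}, Rogosinski's inequality (Lemma \ref{lm2}) giving $|c_j|\le|B_1|$, the Kuroki--Owa bound (Theorem \ref{th.ko}) for the coefficients of $g$, and the coefficient comparison in $zf'(z)=g(z)h(z)$, which reproduces the paper's identity $na_n-b_n=c_{n-1}+\sum_{j=1}^{n-2}c_jb_{n-j}$. Your small extra step of applying Rogosinski to $h-1\prec f_{\alpha,\beta}-1$ (so that both functions vanish at the origin, as the lemma requires) is if anything slightly more careful than the paper's direct application.
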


\begin{proof}
Let the function $f\in \mathcal{S}_{g}\left( \alpha ,\beta \right) $ be of
the form $\left( \ref{1.1}\right) $. Therefore, there exists a function%
\begin{equation}
g(z)=z+\sum_{n=2}^{\infty }b_{n}z^{n}\in \mathcal{S}\left( \delta ,\beta
\right)  \label{2.2}
\end{equation}%
so that%
\begin{equation}
\alpha <\Re \left( \frac{zf^{\prime }\left( z\right) }{g\left( z\right) }%
\right) \mathbb{<\beta }.  \label{2.3}
\end{equation}%
Note that by Theorem $\ref{th.ko}$, we have%
\begin{equation}
\left\vert b_{n}\right\vert \leq \frac{\prod\limits_{k=2}^{n}\left[ k-2+%
\frac{2\left( \beta -\delta \right) }{\pi }\sin \frac{\pi \left( 1-\delta
\right) }{\beta -\delta }\right] }{\left( n-1\right) !}\qquad \left(
n=2,3,\ldots \right) .  \label{2.a}
\end{equation}%
Let us define the function $p(z)$ by%
\begin{equation}
p(z)=\frac{zf^{\prime }\left( z\right) }{g\left( z\right) }\qquad (z\in
\mathbb{U}).  \label{2.7}
\end{equation}%
Then according to the assertion of Lemma $\ref{lm}$, we get%
\begin{equation}
p(z)\prec f_{\alpha ,\beta }(z)\qquad (z\in \mathbb{U}),  \label{2.8}
\end{equation}%
where $f_{\alpha ,\beta }(z)$ is defined by $\left( \ref{1.2}\right) $.
Hence, using Lemma $\ref{lm2}$, we obtain%
\begin{equation}
\left\vert \frac{p^{\left( m\right) }\left( 0\right) }{m!}\right\vert
=\left\vert c_{m}\right\vert \leq \left\vert B_{1}\right\vert \qquad \left(
m=1,2,\ldots \right) ,  \label{2.9}
\end{equation}%
where%
\begin{equation}
p(z)=1+c_{1}z+c_{2}z^{2}+\cdots \qquad (z\in \mathbb{U})  \label{2.10}
\end{equation}%
and (by $\left( \ref{1.3}\right) $)%
\begin{equation}
\left\vert B_{1}\right\vert =\left\vert \frac{\beta -\alpha }{\pi }i\left(
1-e^{2\pi i\frac{1-\alpha }{\beta -\alpha }}\right) \right\vert =\frac{%
2\left( \beta -\alpha \right) }{\pi }\sin \frac{\pi \left( 1-\alpha \right)
}{\beta -\alpha }.  \label{2.c}
\end{equation}%
Also from $\left( \ref{2.7}\right) $, we find%
\begin{equation}
zf^{\prime }(z)=p(z)g(z).  \label{2.11}
\end{equation}%
Since $a_{1}=b_{1}=1$, in view of $\left( \ref{2.11}\right) $, we obtain%
\begin{equation}
na_{n}-b_{n}=c_{n-1}+c_{n-2}b_{2}+\cdots
+c_{1}b_{n-1}=c_{n-1}+\sum_{j=1}^{n-2}c_{j}b_{n-j}\qquad \left( n=2,3,\ldots
\right) .  \label{2.12}
\end{equation}%
Now we get from $\left( \ref{2.a}\right) ,\left( \ref{2.9}\right) $ and $%
\left( \ref{2.12}\right) ,$%
\begin{eqnarray*}
\left\vert a_{n}\right\vert &\leq &\frac{\prod\limits_{k=2}^{n}\left[ k-2+%
\frac{2\left( \beta -\delta \right) }{\pi }\sin \frac{\pi \left( 1-\delta
\right) }{\beta -\delta }\right] }{n!} \\
&&+\frac{\left\vert B_{1}\right\vert }{n}\left( 1+\sum_{j=1}^{n-2}\frac{%
\prod\limits_{k=2}^{n-j}\left[ k-2+\frac{2\left( \beta -\delta \right) }{\pi
}\sin \frac{\pi \left( 1-\delta \right) }{\beta -\delta }\right] }{\left(
n-j-1\right) !}\right) \quad \left( n=2,3,\ldots \right) .
\end{eqnarray*}%
This evidently completes the proof of Theorem $\ref{thm1}.$
\end{proof}

\begin{remark}
It is worthy to note that the inequality obtained for $\left\vert
a_{n}\right\vert $ in Theorem $\ref{thm1}$ is also valid when $\alpha
,\delta <1<\beta $\ by Theorem $\ref{th.ko}$.
\end{remark}

Letting $\beta \rightarrow \infty $ in Theorem $\ref{thm1}$, we have the
coefficient bounds for close-to-convex functions of order $\alpha $ and type
$\delta $.

\begin{cor}
\cite{L} Let $\alpha $\ and $\delta $\ be real numbers such that $0\leq
\alpha ,\delta <1$\ and let the function $f\in \mathcal{A}$ be defined by $(%
\ref{1.1})$. If $f\in \mathcal{C}(\alpha ,\delta )$, then%
\begin{equation*}
\left\vert a_{n}\right\vert \leq \frac{2\left( 3-2\delta \right) \left(
4-2\delta \right) \cdots \left( n-2\delta \right) }{n!}\left[ n\left(
1-\alpha \right) +\left( \alpha -\delta \right) \right] \qquad \left(
n=2,3,\ldots \right) .
\end{equation*}
\end{cor}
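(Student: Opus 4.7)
The plan is to obtain the corollary as the $\beta \to \infty$ limit of Theorem \ref{thm1}, followed by an algebraic identity that collapses the resulting sum. Since $\mathcal{S}_g(\alpha,\beta)$ reduces to $\mathcal{C}(\alpha,\delta)$ in this limit (Remark $(i)$), and the proof of Theorem \ref{thm1} uses only the starlike-factor bound for $g$ and the Rogosinski-type bound on $p(z)=zf'(z)/g(z)$, both of which pass to the limit without change, it suffices to simplify the limiting form of the bound. The key analytic ingredient is
\[
\lim_{\beta \to \infty}\frac{2(\beta-x)}{\pi}\sin\frac{\pi(1-x)}{\beta-x}=2(1-x)\qquad (x<1),
\]
which follows from $\sin y\sim y$ as $y\to 0$. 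Applying this with $x=\delta$ inside the products and with $x=\alpha$ in front of the sum in Theorem \ref{thm1} yields
\[
|a_n| \le \frac{\prod_{k=2}^{n}(k-2\delta)}{n!} + \frac{2(1-\alpha)}{n}\left(1+\sum_{j=1}^{n-2}\frac{\prod_{k=2}^{n-j}(k-2\delta)}{(n-j-1)!}\right).
\]

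Next I would simplify the bracketed sum. Writing $Q_n:=(3-2\delta)(4-2\delta)\cdots(n-2\delta)$ and observing that $\prod_{k=2}^{m}(k-2\delta)=(2-2\delta)(3-2\delta)\cdots(m-2\delta)$ equals the Pochhammer symbol $(2-2\delta)_{m-1}$, where $(a)_k:=a(a+1)\cdots(a+k-1)$, the reindexing $m=n-j$ turns the bracket into $\sum_{k=0}^{n-2}(2-2\delta)_k/k!$. The hockey-stick identity $\sum_{k=0}^{N}\binom{a+k-1}{k}=\binom{a+N}{N}$, applied with $a=2-2\delta$ and $N=n-2$, collapses this partial sum to $(3-2\delta)_{n-2}/(n-2)! = Q_n/(n-2)!$.

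Substituting back,
\[
|a_n|\le \frac{(2-2\delta)Q_n}{n!}+\frac{2(1-\alpha)(n-1)Q_n}{n!}=\frac{Q_n}{n!}\bigl[(2-2\delta)+2(1-\alpha)(n-1)\bigr],
\]
and expanding the bracket gives $2[n(1-\alpha)+(\alpha-\delta)]$, which matches the asserted bound. The main obstacle is the middle step: recognizing the partial sum of rising factorials as a telescoping hockey-stick sum; the limit in $\beta$ and the closing arithmetic are then routine.
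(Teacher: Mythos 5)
Your proof is correct and takes essentially the same route as the paper, which simply presents this corollary as the $\beta\to\infty$ limit of Theorem \ref{thm1} with no further justification. The limit evaluation $\frac{2(\beta-x)}{\pi}\sin\frac{\pi(1-x)}{\beta-x}\to 2(1-x)$ and the hockey-stick collapse of $\sum_{k=0}^{n-2}(2-2\delta)_k/k!$ to $(3-2\delta)_{n-2}/(n-2)!$ are exactly the details the paper leaves implicit, and both check out, yielding the stated bound $\frac{2(3-2\delta)\cdots(n-2\delta)}{n!}\left[n(1-\alpha)+(\alpha-\delta)\right]$.
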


Letting $\delta =0,\;\beta \rightarrow \infty $ in Theorem $\ref{thm1}$, we
have the following coefficient bounds for close-to-convex functions of order
$\alpha $.

\begin{cor}
Let $\alpha $\ be a real number such that $0\leq \alpha <1$\ and let the
function $f\in \mathcal{A}$ be defined by $(\ref{1.1})$. If $f\in \mathcal{C}%
(\alpha )$, then%
\begin{equation*}
\left\vert a_{n}\right\vert \leq n\left( 1-\alpha \right) +\alpha \qquad
\left( n=2,3,\ldots \right) .
\end{equation*}
\end{cor}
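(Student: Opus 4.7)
The plan is to derive this corollary as the limiting case $\delta = 0$, $\beta \to \infty$ of the bound established in Theorem \ref{thm1}. By part $(ii)$ of the preceding Remark, these choices reduce the class $\mathcal{S}_g(\alpha,\beta)$ to $\mathcal{C}(\alpha)$, so it suffices to pass to the limit inside the explicit inequality of Theorem \ref{thm1}. Since the right-hand side of that inequality involves only a finite product and a finite sum whose lengths depend on $n$ alone, no uniform-convergence issue arises when letting $\beta \to \infty$.

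The crux is the elementary trigonometric limit
$$\lim_{\beta \to \infty}\frac{2(\beta-\gamma)}{\pi}\sin\frac{\pi(1-\gamma)}{\beta-\gamma} = 2(1-\gamma),$$
valid for any real $\gamma$, which follows from $\lim_{x\to 0}\frac{\sin x}{x}=1$. Applied with $\gamma = \delta = 0$, this replaces each bracketed factor $\bigl[\,k-2+\frac{2(\beta-\delta)}{\pi}\sin\frac{\pi(1-\delta)}{\beta-\delta}\,\bigr]$ by $k$, so the products $\prod_{k=2}^{m}[\,\cdots\,]$ tend to $m!$ for every relevant $m$. Applied with $\gamma = \alpha$, the outer factor $\frac{2(\beta-\alpha)}{n\pi}\sin\frac{\pi(1-\alpha)}{\beta-\alpha}$ tends to $\frac{2(1-\alpha)}{n}$.

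Substituting these limits into the inequality of Theorem \ref{thm1}, the leading term becomes $n!/n! = 1$, each summand inside the parenthesis collapses to $(n-j)!/(n-j-1)! = n-j$, and the remaining arithmetic-progression sum $\sum_{j=1}^{n-2}(n-j)$ evaluates to $\frac{n(n-1)}{2}-1$ in closed form. Assembling the pieces gives
$$|a_n| \leq 1 + \frac{2(1-\alpha)}{n}\cdot\frac{n(n-1)}{2} = 1 + (1-\alpha)(n-1) = n(1-\alpha) + \alpha,$$
which is exactly the claimed inequality. No serious obstacle is anticipated: the only minor care needed is the degenerate case $n=2$, where the inner sum is empty and one recovers $|a_2| \leq 2-\alpha$ in agreement with the formula.
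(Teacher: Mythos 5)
Your proposal is correct and follows exactly the paper's route: the paper obtains this corollary precisely by setting $\delta=0$ and letting $\beta\to\infty$ in Theorem \ref{thm1}, and your limit computation (each bracket tending to $k$, the outer factor to $2(1-\alpha)/n$, and the sum $1+\sum_{j=1}^{n-2}(n-j)=\tfrac{n(n-1)}{2}$) fills in the arithmetic the paper leaves implicit. Nothing further is needed.
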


Letting $\alpha =\delta =0,\;\beta \rightarrow \infty $ in Theorem $\ref%
{thm1}$, we have the well-known coefficient bounds for close-to-convex
functions.

\begin{cor}
\cite{Re} Let the function $f\in \mathcal{A}$ be defined by $(\ref{1.1})$.
If $f\in \mathcal{C}$, then%
\begin{equation*}
\left\vert a_{n}\right\vert \leq n\qquad \left( n=2,3,\ldots \right) .
\end{equation*}
\end{cor}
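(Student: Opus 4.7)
The strategy is to deduce this classical Reade bound by specializing Theorem \ref{thm1} with $\alpha = \delta = 0$ and then letting $\beta \to \infty$, as the author's lead-in sentence indicates. Remark~(iii) already confirms that under these parameter choices the class $\mathcal{S}_{g}(\alpha,\beta)$ collapses to $\mathcal{C}$, so the proof reduces to computing the limit of the estimate of Theorem \ref{thm1}.

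The single numerical limit driving everything is
\[
\lim_{\beta \to \infty} \frac{2\beta}{\pi}\sin\frac{\pi}{\beta} = 2,
\]
obtained from $\sin x \sim x$ as $x \to 0$. Feeding this into the two quantities appearing in Theorem \ref{thm1}: with $\delta = 0$, the bracket $k - 2 + \frac{2(\beta-\delta)}{\pi}\sin\frac{\pi(1-\delta)}{\beta-\delta}$ tends to $k$; and with $\alpha = 0$, the prefactor $\frac{2(\beta-\alpha)}{\pi}\sin\frac{\pi(1-\alpha)}{\beta-\alpha}$ tends to $2$.

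I then substitute these limiting values into the expression of Theorem \ref{thm1}. The first summand becomes $\frac{\prod_{k=2}^{n} k}{n!} = \frac{n!}{n!} = 1$. For the second summand, $\prod_{k=2}^{n-j} k = (n-j)!$, so
\[
\frac{2}{n}\left(1 + \sum_{j=1}^{n-2} \frac{(n-j)!}{(n-j-1)!}\right) = \frac{2}{n}\left(1 + \sum_{j=1}^{n-2}(n-j)\right) = \frac{2}{n}\cdot \frac{n(n-1)}{2} = n-1,
\]
where I used the reindex $i = n-j$ to evaluate $1 + \sum_{j=1}^{n-2}(n-j) = 1 + \sum_{i=2}^{n-1} i = n(n-1)/2$. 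Adding the two summands yields $1 + (n-1) = n$, which is exactly the claimed bound.

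No new analytic input is needed beyond Theorem \ref{thm1}. The only place requiring attention is the algebraic simplification in the previous paragraph — correctly identifying $\prod_{k=2}^{n-j} k$ with $(n-j)!$ and carrying out the reindexing — but once that is done the arithmetic collapses immediately. The formal limit $\beta \to \infty$ is exactly the identification described in Remark~(iii), so no further analytic justification is required.
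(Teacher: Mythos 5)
Your proposal is correct and follows exactly the route the paper intends: the paper simply states ``Letting $\alpha=\delta=0$, $\beta\to\infty$ in Theorem \ref{thm1}'' without showing the computation, and you have carried out that limit and the resulting algebraic simplification ($\prod_{k=2}^{n}k=n!$, the reindexed sum equaling $n(n-1)/2$) accurately. No discrepancy with the paper's approach.
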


\begin{thm}
\label{thm2}Let $\alpha ,\beta $\ and $\delta $\ be real numbers such that $%
0\leq \alpha ,\delta <1<\beta $\ and let the function $f\in \mathcal{A}$ be
defined by $(\ref{1.1})$. If $f\in \mathcal{B}_{g}\left( \alpha ,\beta ;\rho
\right) $, then%
\begin{eqnarray}
\left\vert a_{n}\right\vert  &\leq &\left\{ \frac{\prod\limits_{k=2}^{n}%
\left[ k-2+\frac{2\left( \beta -\delta \right) }{\pi }\sin \frac{\pi \left(
1-\delta \right) }{\beta -\delta }\right] }{n!}\right.   \notag \\
&&\left. +\frac{2\left( \beta -\alpha \right) }{n\pi }\sin \frac{\pi \left(
1-\alpha \right) }{\beta -\alpha }\left( 1+\sum_{j=1}^{n-2}\frac{%
\prod\limits_{k=2}^{n-j}\left[ k-2+\frac{2\left( \beta -\delta \right) }{\pi
}\sin \frac{\pi \left( 1-\delta \right) }{\beta -\delta }\right] }{\left(
n-j-1\right) !}\right) \right\}   \notag \\
&&\times \frac{\left( 1+\rho \right) \left( 2+\rho \right) }{\left( n+\rho
\right) \left( n+1+\rho \right) }\qquad \left( n=2,3,\ldots \right) ,
\label{3.1}
\end{eqnarray}%
where $\rho \in \mathbb{R}\backslash \left( -\infty ,-1\right] .$
\end{thm}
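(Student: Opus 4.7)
The plan is to expand both sides of the defining non-homogeneous Cauchy-Euler equation as power series in $z$, equate coefficients to obtain a linear relation between the coefficients of $f$ and those of $\varphi$, and then feed the bound from Theorem \ref{thm1} (applied to $\varphi\in\mathcal{S}_g(\alpha,\beta)$) into that relation.

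Concretely, first write $\varphi(z)=z+\sum_{n=2}^{\infty}A_n z^n$ with $\varphi\in\mathcal{S}_g(\alpha,\beta)$, so that by Theorem \ref{thm1}
$$
|A_n|\leq \frac{\prod_{k=2}^{n}\!\left[k-2+\tfrac{2(\beta-\delta)}{\pi}\sin\tfrac{\pi(1-\delta)}{\beta-\delta}\right]}{n!}+\frac{2(\beta-\alpha)}{n\pi}\sin\tfrac{\pi(1-\alpha)}{\beta-\alpha}\left(1+\sum_{j=1}^{n-2}\frac{\prod_{k=2}^{n-j}\!\left[k-2+\tfrac{2(\beta-\delta)}{\pi}\sin\tfrac{\pi(1-\delta)}{\beta-\delta}\right]}{(n-j-1)!}\right),
$$
which is exactly the expression in braces on the right-hand side of $(\ref{3.1})$. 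Next, with $f(z)=z+\sum_{n=2}^{\infty}a_n z^n$, compute termwise
$$
z^{2}f''(z)+2(1+\rho)zf'(z)+\rho(1+\rho)f(z)=(1+\rho)(2+\rho)z+\sum_{n=2}^{\infty}\bigl[n(n-1)+2n(1+\rho)+\rho(1+\rho)\bigr]a_n z^n.
$$
The bracketed coefficient factors as $(n+\rho)(n+1+\rho)$, so comparing with $(1+\rho)(2+\rho)\varphi(z)$ the $z$-term matches automatically (both sides give $(1+\rho)(2+\rho)$), while for $n\geq 2$ one obtains the clean identity
$$
a_n=\frac{(1+\rho)(2+\rho)}{(n+\rho)(n+1+\rho)}\,A_n.
$$
Taking absolute values and inserting the bound on $|A_n|$ above yields $(\ref{3.1})$.

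The restriction $\rho\in\mathbb{R}\setminus(-\infty,-1]$ guarantees $(n+\rho)(n+1+\rho)\neq 0$ for every $n\geq 2$, so the division above is always legitimate and the scaling factor $(1+\rho)(2+\rho)/[(n+\rho)(n+1+\rho)]$ is positive. There is no serious obstacle: the only step that demands care is the algebraic factorization $n(n-1)+2n(1+\rho)+\rho(1+\rho)=(n+\rho)(n+1+\rho)$, which is the mechanism that makes the Cauchy-Euler operator invertible coefficient-by-coefficient, and everything else is a direct combination of power-series matching with the already-established bound of Theorem \ref{thm1}.
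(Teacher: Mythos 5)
Your proposal is correct and follows essentially the same route as the paper: expand the Cauchy--Euler equation coefficientwise to get $a_n=\frac{(1+\rho)(2+\rho)}{(n+\rho)(n+1+\rho)}\varphi_n$ and then apply Theorem \ref{thm1} to $\varphi\in\mathcal{S}_g(\alpha,\beta)$. Your explicit verification of the factorization $n(n-1)+2n(1+\rho)+\rho(1+\rho)=(n+\rho)(n+1+\rho)$ is a detail the paper leaves implicit, but the argument is the same.
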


\begin{proof}
Let the function $f\in \mathcal{A}$ be given by $(\ref{1.1})$. Also let%
\begin{equation*}
\varphi (z)=z+\sum_{n=2}^{\infty }\varphi _{n}z^{n}\in \mathcal{S}_{g}\left(
\alpha ,\beta \right) .
\end{equation*}%
We then deduce from Definition $\ref{dfn4}$ that%
\begin{equation*}
a_{n}=\frac{\left( 1+\rho \right) \left( 2+\rho \right) }{\left( n+\rho
\right) \left( n+1+\rho \right) }\varphi _{n}\qquad \left( n=2,3,\ldots
;\;\rho \in \mathbb{R}\backslash \left( -\infty ,-1\right] \right) .
\end{equation*}%
Thus, by using Theorem $\ref{thm1}$ in conjunction with the above equality,
we have assertion $\left( \ref{3.1}\right) $ of Theorem $\ref{thm2}$.
\end{proof}

Letting $\beta \rightarrow \infty $ in Theorem $\ref{thm2}$, we have the
following consequence.

\begin{cor}
Let $\alpha $\ and $\delta $\ be real numbers such that $0\leq \alpha
,\delta <1$\ and let the function $f\in \mathcal{A}$ be defined by $(\ref%
{1.1})$. If $f\in \mathcal{B}_{g}\left( \alpha ;\rho \right) $, then%
\begin{equation*}
\left\vert a_{n}\right\vert \leq \left\{ \frac{\prod\limits_{k=2}^{n}\left(
k-2\delta \right) }{n!}+\frac{2\left( 1-\alpha \right) }{n}\left(
1+\sum_{j=1}^{n-2}\frac{\prod\limits_{k=2}^{n-j}\left( k-2\delta \right) }{%
\left( n-j-1\right) !}\right) \right\} \frac{\left( 1+\rho \right) \left(
2+\rho \right) }{\left( n+\rho \right) \left( n+1+\rho \right) }\qquad
\left( n=2,3,\ldots \right) ,
\end{equation*}%
where $\rho \in \mathbb{R}\backslash \left( -\infty ,-1\right] .$
\end{cor}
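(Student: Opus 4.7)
The plan is to solve the non-homogeneous Cauchy--Euler equation directly at the level of Taylor coefficients, and then reduce the bound on $|a_n|$ to the bound on the coefficients of the driving function $\varphi \in \mathcal{S}_{g}(\alpha,\beta)$ which was just established in Theorem~\ref{thm1}.

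First I would write both $f$ and $\varphi$ as power series: $f(z)=z+\sum_{n\ge 2} a_n z^n$ and $\varphi(z)=z+\sum_{n\ge 2}\varphi_n z^n$, with $\varphi\in\mathcal{S}_g(\alpha,\beta)$. Substituting $w=f(z)$ into the differential equation in Definition~\ref{dfn4} and equating coefficients of $z^n$ on the two sides, the left-hand side contributes
\[
\bigl[n(n-1)+2(1+\rho)n+\rho(1+\rho)\bigr]a_n
\]
while the right-hand side contributes $(1+\rho)(2+\rho)\varphi_n$. The key algebraic observation is the factorization
\[
n(n-1)+2(1+\rho)n+\rho(1+\rho)=(n+\rho)(n+1+\rho),
\]
which is easily checked by expansion. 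Since $\rho\in\mathbb{R}\setminus(-\infty,-1]$, the factor $(n+\rho)(n+1+\rho)$ is nonzero for every $n\ge 2$, so we may solve for $a_n$ and obtain
\[
a_n=\frac{(1+\rho)(2+\rho)}{(n+\rho)(n+1+\rho)}\,\varphi_n\qquad(n=2,3,\ldots).
\]

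Once this identity is in hand, the rest is immediate: taking absolute values and applying Theorem~\ref{thm1} to $\varphi\in\mathcal{S}_g(\alpha,\beta)$ (which gives exactly the bracketed quantity appearing in $(\ref{3.1})$ as an upper bound for $|\varphi_n|$) yields the claimed estimate. The initial cases $n=2$ and $n=3$ of the coefficient-matching do not need to be treated separately, since the sum $\sum_{j=1}^{n-2}$ in the Theorem~\ref{thm1} bound is interpreted as empty when $n=2$, matching the convention used in the statement.

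The only mildly nontrivial step is recognizing the quadratic-in-$n$ factorization, which makes the denominators $(n+\rho)(n+1+\rho)$ emerge naturally; once that is seen, the proof is essentially a one-line reduction to Theorem~\ref{thm1}. I do not foresee any genuine obstacle, since the differential equation is linear with constant-type Euler coefficients and decouples completely across Fourier modes.
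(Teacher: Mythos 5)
Your reduction of the differential equation to the coefficient identity
\[
a_n=\frac{(1+\rho)(2+\rho)}{(n+\rho)(n+1+\rho)}\,\varphi_n\qquad(n=2,3,\ldots)
\]
is correct (the factorization $n(n-1)+2(1+\rho)n+\rho(1+\rho)=(n+\rho)(n+1+\rho)$ checks out, and $\rho\notin(-\infty,-1]$ keeps the denominator nonzero), and it is exactly the step used in the paper's proof of Theorem~\ref{thm2}. The gap is in the last step. The corollary you are asked to prove concerns the class $\mathcal{B}_{g}\left( \alpha ;\rho \right)$, in which the driving function satisfies $\varphi\in\mathcal{C}(\alpha,\delta)$ with $g\in\mathcal{S}^{\ast}(\delta)$ --- this is the $\beta\rightarrow\infty$ situation, and $\varphi$ need not lie in $\mathcal{S}_{g}(\alpha,\beta)$ for any finite $\beta$, since membership in $\mathcal{S}_{g}(\alpha,\beta)$ imposes the extra upper restriction $\Re\left(z\varphi'(z)/g(z)\right)<\beta$. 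So you cannot simply ``apply Theorem~\ref{thm1} to $\varphi\in\mathcal{S}_{g}(\alpha,\beta)$''; and even if you could, that would give the $\beta$-dependent bracket of $(\ref{3.1})$, whereas the bound to be proved contains the $\beta$-free factors $(k-2\delta)$ and $2(1-\alpha)$. Your proposal in effect re-proves Theorem~\ref{thm2}, not this corollary.

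To close the gap you need the limiting form of the coefficient estimate. Either argue as the paper does --- take $\beta\rightarrow\infty$ in Theorem~\ref{thm2}, observing that
\[
\frac{2(\beta-\gamma)}{\pi}\sin\frac{\pi(1-\gamma)}{\beta-\gamma}\longrightarrow 2(1-\gamma)\qquad(\beta\rightarrow\infty)
\]
for $\gamma=\delta$ and $\gamma=\alpha$, so that $k-2+2(1-\delta)=k-2\delta$ and the second factor becomes $2(1-\alpha)/n$ --- or run the Theorem~\ref{thm1} argument directly for $\varphi\in\mathcal{C}(\alpha,\delta)$: here $z\varphi'(z)/g(z)\prec\frac{1+(1-2\alpha)z}{1-z}$, so Rogosinski's Lemma~\ref{lm2} gives $|c_m|\le 2(1-\alpha)$, while the classical bound for $g\in\mathcal{S}^{\ast}(\delta)$ gives $|b_n|\le\prod_{k=2}^{n}(k-2\delta)/(n-1)!$, and then $(\ref{2.12})$ yields the bracketed quantity as a bound for $|\varphi_n|$, which combined with your identity for $a_n$ finishes the proof.
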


Letting $\delta =0,\;\beta \rightarrow \infty $ in Theorem $\ref{thm2}$, we
have the following consequence.

\begin{cor}
Let $\alpha $\ be a real number such that $0\leq \alpha <1$\ and let the
function $f\in \mathcal{A}$ be defined by $(\ref{1.1})$. If $f\in \mathcal{H}%
_{g}\left( \alpha ;\rho \right) $, then%
\begin{equation*}
\left\vert a_{n}\right\vert \leq \left[ n\left( 1-\alpha \right) +\alpha %
\right] \frac{\left( 1+\rho \right) \left( 2+\rho \right) }{\left( n+\rho
\right) \left( n+1+\rho \right) }\qquad \left( n=2,3,\ldots \right) ,
\end{equation*}%
where $\rho \in \mathbb{R}\backslash \left( -\infty ,-1\right] .$
\end{cor}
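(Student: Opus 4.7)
The plan is to reduce this to Theorem \ref{thm1} by explicitly solving for the Taylor coefficients of $f$ in terms of those of $\varphi$ from the Cauchy-Euler differential equation, then bounding $|\varphi_n|$ via the result already proved.

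First I would write $f(z)=z+\sum_{n=2}^{\infty}a_n z^n$ and $\varphi(z)=z+\sum_{n=2}^{\infty}\varphi_n z^n$, where $\varphi\in\mathcal{S}_{g}(\alpha,\beta)$. Computing
$z^{2}f''(z)=\sum_{n=2}^{\infty}n(n-1)a_n z^n$, $2(1+\rho)zf'(z)=2(1+\rho)z+\sum_{n=2}^{\infty}2(1+\rho)n\,a_n z^n$, and $\rho(1+\rho)f(z)=\rho(1+\rho)z+\sum_{n=2}^{\infty}\rho(1+\rho)a_n z^n$, I would substitute into the differential equation and compare the coefficients of $z^n$ on both sides. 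The constant term $(1+\rho)(2+\rho)$ appears automatically on the left from the $n=1$ contribution, and for $n\geq 2$ the coefficient on the left is $[n(n-1)+2(1+\rho)n+\rho(1+\rho)]a_n$.

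The key algebraic step is the factorisation
\begin{equation*}
n(n-1)+2(1+\rho)n+\rho(1+\rho)=n^{2}+n+2n\rho+\rho+\rho^{2}=(n+\rho)(n+1+\rho),
\end{equation*}
which yields the simple relation
\begin{equation*}
a_{n}=\frac{(1+\rho)(2+\rho)}{(n+\rho)(n+1+\rho)}\,\varphi_{n}\qquad (n=2,3,\ldots).
\end{equation*}
Since $\rho\in\mathbb{R}\setminus(-\infty,-1]$, the denominator $(n+\rho)(n+1+\rho)$ is strictly positive for all $n\geq 2$, so no division issues arise.

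Finally I would invoke Theorem \ref{thm1} applied to $\varphi\in\mathcal{S}_{g}(\alpha,\beta)$ to bound $|\varphi_n|$ by the expression in braces on the right-hand side of $(\ref{3.1})$, and multiplying by the positive factor $(1+\rho)(2+\rho)/[(n+\rho)(n+1+\rho)]$ delivers the claimed bound. There is really no serious obstacle here; the only point needing any care is the coefficient-matching that produces the factorisation $(n+\rho)(n+1+\rho)$ and the observation that the assumption $\rho\notin(-\infty,-1]$ guarantees this factor never vanishes for $n\geq 2$.
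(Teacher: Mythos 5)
Your derivation of the coefficient relation from the Cauchy--Euler equation is correct and is exactly the mechanism of the paper's Theorem \ref{thm2}: comparing coefficients of $z^{n}$ gives $\left[ n(n-1)+2(1+\rho )n+\rho (1+\rho )\right] a_{n}=(1+\rho )(2+\rho )\varphi _{n}$, the factorisation $(n+\rho )(n+1+\rho )$ is right, and the nonvanishing of the denominator for $\rho >-1$, $n\geq 2$ is correctly noted. However, there is a genuine gap in the final step. The corollary concerns the class $\mathcal{H}_{g}\left( \alpha ;\rho \right) $, in which the driving function satisfies $\varphi \in \mathcal{C}(\alpha )$ with $g$ starlike (this is the $\delta =0$, $\beta \rightarrow \infty $ case of Definition \ref{dfn4}); $\varphi $ is \emph{not} assumed to lie in $\mathcal{S}_{g}\left( \alpha ,\beta \right) $ for any finite $\beta $, so you cannot ``invoke Theorem \ref{thm1} applied to $\varphi \in \mathcal{S}_{g}\left( \alpha ,\beta \right) $'' verbatim. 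What you need is the coefficient bound for close-to-convex functions of order $\alpha $, namely $\left\vert \varphi _{n}\right\vert \leq n\left( 1-\alpha \right) +\alpha $ (the paper's second corollary to Theorem \ref{thm1}, or equivalently the argument of Theorem \ref{thm1} run with $\left\vert B_{1}\right\vert =2(1-\alpha )$ and $\left\vert b_{n}\right\vert \leq n$ for starlike $g$).

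Moreover, even if one grants the general bound of Theorem \ref{thm1}/Theorem \ref{thm2}, your claim that ``multiplying by the positive factor delivers the claimed bound'' skips the computation that turns the expression in braces of $\left( \ref{3.1}\right) $ into the closed form $n\left( 1-\alpha \right) +\alpha $. With $\delta =0$ one has $\frac{2\beta }{\pi }\sin \frac{\pi }{\beta }\rightarrow 2$ as $\beta \rightarrow \infty $, so $\prod_{k=2}^{n}\left[ k-2+2\right] =n!$ and the first term equals $1$; also $\frac{2\left( \beta -\alpha \right) }{\pi }\sin \frac{\pi \left( 1-\alpha \right) }{\beta -\alpha }\rightarrow 2\left( 1-\alpha \right) $, and
\begin{equation*}
\frac{2\left( 1-\alpha \right) }{n}\left( 1+\sum_{j=1}^{n-2}\frac{\left( n-j\right) !}{\left( n-j-1\right) !}\right) =\frac{2\left( 1-\alpha \right) }{n}\left( 1+\sum_{j=1}^{n-2}\left( n-j\right) \right) =\frac{2\left( 1-\alpha \right) }{n}\cdot \frac{n\left( n-1\right) }{2}=\left( n-1\right) \left( 1-\alpha \right) ,
\end{equation*}
so the braces collapse to $1+\left( n-1\right) \left( 1-\alpha \right) =n\left( 1-\alpha \right) +\alpha $. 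Either supply this limiting computation, or bypass it by citing the bound $\left\vert \varphi _{n}\right\vert \leq n\left( 1-\alpha \right) +\alpha $ for $\varphi \in \mathcal{C}(\alpha )$ directly; as written, your proof establishes Theorem \ref{thm2} rather than the stated corollary.
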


Letting $\alpha =\delta =0,\;\beta \rightarrow \infty $ in Theorem $\ref%
{thm2}$, we have the following consequence.

\begin{cor}
Let the function $f\in \mathcal{A}$ be defined by $(\ref{1.1})$. If $f\in
\mathcal{M}_{g}\left( \rho \right) $, then%
\begin{equation*}
\left\vert a_{n}\right\vert \leq n\frac{\left( 1+\rho \right) \left( 2+\rho
\right) }{\left( n+\rho \right) \left( n+1+\rho \right) }\qquad \left(
n=2,3,\ldots \right) ,
\end{equation*}%
where $\rho \in \mathbb{R}\backslash \left( -\infty ,-1\right] .$
\end{cor}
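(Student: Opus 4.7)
The plan is to exploit the Cauchy--Euler differential equation defining $\mathcal{M}_g(\rho)$ to reduce the bound for $|a_n|$ to the classical Reade estimate for close-to-convex functions (already recorded in this paper as the corollary attributed to \cite{Re}). By the specialization of Definition \ref{dfn4} with $\alpha=\delta=0$ and $\beta\to\infty$, a function $f(z)=z+\sum_{n=2}^{\infty}a_n z^n$ lies in $\mathcal{M}_g(\rho)$ precisely when there is some $\varphi(z)=z+\sum_{n=2}^{\infty}\varphi_n z^n\in\mathcal{C}$ with
\begin{equation*}
z^2 f''(z)+2(1+\rho)z f'(z)+\rho(1+\rho)f(z)=(1+\rho)(2+\rho)\varphi(z).
\end{equation*}
The whole task therefore reduces to reading off the induced relation between $a_n$ and $\varphi_n$ and then inserting the bound $|\varphi_n|\leq n$.

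First I would substitute the power series expansions of $f$ and $\varphi$ into the ODE and equate coefficients of $z^n$ for $n\geq 2$. On the left-hand side the coefficient of $z^n$ is $[n(n-1)+2(1+\rho)n+\rho(1+\rho)]a_n$, and the elementary factorization
\begin{equation*}
n(n-1)+2(1+\rho)n+\rho(1+\rho)=n^2+(1+2\rho)n+\rho(1+\rho)=(n+\rho)(n+1+\rho)
\end{equation*}
yields the key identity
\begin{equation*}
a_n=\frac{(1+\rho)(2+\rho)}{(n+\rho)(n+1+\rho)}\,\varphi_n \qquad (n\geq 2).
\end{equation*}
The hypothesis $\rho\in\mathbb{R}\setminus(-\infty,-1]$ guarantees $n+\rho>0$ and $n+1+\rho>0$ for every $n\geq 2$, so this formula is legitimate.

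With the coefficient relation in hand, I would invoke the classical Reade bound $|\varphi_n|\leq n$, which is precisely the preceding corollary and is applicable because $\varphi\in\mathcal{C}$. Multiplying through gives
\begin{equation*}
|a_n|\leq n\cdot\frac{(1+\rho)(2+\rho)}{(n+\rho)(n+1+\rho)},
\end{equation*}
which is exactly the asserted estimate. There is no substantial obstacle here: the only steps requiring any care are the algebraic factorization of the quadratic in $n$ and checking that the denominator is nonvanishing on the admissible range of $\rho$; the rest is bookkeeping, and the entire argument parallels the proof of Theorem \ref{thm2} but with the Reade bound replacing Theorem \ref{thm1}.
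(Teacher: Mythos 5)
Your proof is correct and follows essentially the same route as the paper: the corollary is obtained there by setting $\alpha=\delta=0$ and letting $\beta\rightarrow\infty$ in Theorem \ref{thm2}, whose proof rests on precisely the coefficient identity $a_{n}=\frac{(1+\rho)(2+\rho)}{(n+\rho)(n+1+\rho)}\varphi_{n}$ that you derive from the Cauchy--Euler equation. The only difference is cosmetic --- you perform the specialization inside the argument and invoke the Reade bound $\left\vert \varphi_{n}\right\vert \leq n$ for $\varphi\in\mathcal{C}$ directly, and you additionally supply the factorization $n(n-1)+2(1+\rho)n+\rho(1+\rho)=(n+\rho)(n+1+\rho)$ that the paper leaves implicit.
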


\end{document}